\DeclarePairedDelimiter\abs{\lvert}{\rvert}%
\DeclarePairedDelimiter\norm{\lVert}{\rVert}%
\let\oldabs\abs
\def\abs{\@ifstar{\oldabs}{\oldabs*}}
\let\oldnorm\norm
\def\norm{\@ifstar{\oldnorm}{\oldnorm*}}
\newcommand{\eps}{\varepsilon}
\newcommand{\mc}[1]{\mathcal{#1}}
\newcommand{\mf}[1]{\mathfrak{#1}}
\newcommand{\bb}[1]{\mathbb{#1}}
\newcommand{\lp}{\left(}
\newcommand{\rp}{\right)}
\theoremstyle{definition}
\theoremstyle{plain}
\newtheorem{thm}{Theorem}
\newtheorem{lem}[thm]{Lemma}
\newtheorem{cor}[thm]{Corollary}
\newtheorem*{claim*}{Claim}
\newtheorem*{example*}{Example}
\newtheorem*{remark*}{Remark}
\newcommand{\Frob}{\operatorname{Frob}}
\newcommand{\GL}{\operatorname{GL}}
\newcommand{\SL}{\operatorname{SL}}
\newcommand{\Gal}{\operatorname{Gal}}
\newcommand{\Aut}{\operatorname{Aut}}
\newcommand{\rhobar}{\operatorname{\overline{\rho}}}
\newcommand{\vol}{\operatorname{vol}}
\title{A lower bound on the proportion of modular elliptic curves over Galois CM fields}
\author{Zachary Feng}
\date{}
\begin{document}

\begin{abstract}
	We calculate an explicit lower bound on the proportion of elliptic curves that are modular over any Galois CM field not containing \( \zeta_5 \). Applied to imaginary quadratic fields, this proportion is at least \( 2/5 \). Applied to cyclotomic fields \( \bb{Q}(\zeta_n) \) with \( 5\nmid n \), this proportion is at least \( 1-\eps \) with only finitely many exceptions of \( n \), for any choice of \( \eps > 0 \).
\end{abstract}

\maketitle

\section{Introduction}

Let \( K \) be a number field. For \( X > 0 \), denote \( \mc{O}_{K,X} \) to be the set of \( \alpha\in\mc{O}_K \) such that for every embedding \( \sigma:K\to \bb{C} \) one has \( \abs{\sigma(\alpha)} < X \). We write \( \mc{E}_X \) for the set of pairs \( (A,B)\in\mc{O}_{K,X^4}\times\mc{O}_{K,X^6} \) such that \( \Delta(A,B) = -16(4A^3+27B^2)\neq 0 \). For \( (A,B)\in \mc{E}_X \), we write \( E_{A,B} \) for the elliptic curve over \( K \) given by the equation \( y^2 = x^3 + Ax+B \). We say that an elliptic curve \( E \) over \( K \) is \textit{modular} if either \( E \) has complex multiplication, or there exists a cuspidal regular algebraic automorphic representation \( \pi \) of \( \GL_2(\bb{A}_K) \) such that \( E \) and \( \pi \) have the same \( L \)-function.

The purpose of this article is to prove the following theorem.
\begin{thm}\label{thm:main}
	Let \( K \) be a Galois CM field with \( \zeta_5\notin K \). Then
	\[ M_K := \liminf_{X\to\infty} \frac{\abs{\{ (A,B)\in \mc{E}_X : E_{A,B} \text{ is modular} \}}}{\abs{\mc{E}_X}} \geq \lp 1-\frac{1}{5^f} \rp^{2r} \]
	where \( 5\mc{O}_K = (\mf{p}_1\cdots\mf{p}_r)^e \) with \( \mf{p}_i\neq \mf{p}_j \) and \( e,f \) are the ramification and inertial indices of \( 5 \) in \( K \).
\end{thm}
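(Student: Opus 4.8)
The plan is to exhibit an explicit congruence condition on the pair $(A,B)$ at the primes above $5$ which forces $E_{A,B}$ to be modular, and then to count the pairs satisfying it. The modularity input will be the following consequence of current modularity lifting theorems over CM fields: \emph{if $K$ is a Galois CM field with $\zeta_5\notin K$ and $E/K$ is an elliptic curve such that (i) the image of $\overline{\rho}_{E,5}\colon G_K\to\GL_2(\bb{F}_5)$ contains $\SL_2(\bb{F}_5)$, and (ii) at every prime $\mf{p}\mid 5$ the curve $E$ has multiplicative reduction, or else good reduction with $j(E)\not\equiv 0,1728\pmod{\mf{p}}$, then $E$ is modular.} The proof of this criterion has two parts. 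First, residual modularity of $\overline{\rho}_{E,5}$: using $X(5)\cong\bb{P}^1$ one produces an auxiliary $E'/K$ with $\overline{\rho}_{E',5}\cong\overline{\rho}_{E,5}$ whose mod-$3$ representation has large image and controlled ramification, and then --- via Langlands--Tunnell, solvable base change from $\bb{Q}$ to $K$, and a mod-$3$ modularity lifting theorem over $K$ --- one deduces that $E'$, hence $\overline{\rho}_{E,5}$, is modular. Second, one applies a mod-$5$ modularity lifting theorem over $K$: hypothesis (i) guarantees that $\overline{\rho}_{E,5}|_{G_{K(\zeta_5)}}$ has adequate (in particular absolutely irreducible) image $\SL_2(\bb{F}_5)$ --- this is where $\zeta_5\notin K$ enters --- and hypothesis (ii) guarantees that at each $\mf{p}\mid 5$ the local representation $\overline{\rho}_{E,5}|_{G_{K_\mf{p}}}$ is of Steinberg type or of a generic crystalline type with distinct Hodge--Tate weights, so that the local deformation problem has the shape required by the Taylor--Wiles patching argument.

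The point of the particular local condition in (ii) is that it is implied by a clean congruence. For $E_{A,B}\colon y^2=x^3+Ax+B$ and a prime $\mf{p}\mid 5$, condition (ii) holds whenever $A\not\equiv 0$ and $B\not\equiv 0\pmod{\mf{p}}$: if $\Delta(A,B)\not\equiv 0$ then the reduction is smooth with $j\equiv 1728\cdot 4A^3/(4A^3+27B^2)$, which is nonzero because $A\not\equiv 0$ and is $\neq 1728$ because $27B^2\not\equiv 0$; while if $\Delta(A,B)\equiv 0$ then $A\not\equiv 0$ forces the repeated root of $x^3+Ax+B$ to be of multiplicity exactly two --- a triple root would have to be $0$, which is not a root since $B\not\equiv 0$ --- so the reduction is nodal and $E$ has multiplicative reduction at $\mf{p}$. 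The excluded residues are precisely $A\equiv 0$ (giving $j\equiv 0$, i.e.\ supersingular reduction with extra automorphisms in characteristic $5$), $B\equiv 0$ (giving $j\equiv 1728$), and $A\equiv B\equiv 0$ (additive reduction); thus the congruence cuts out $(5^f-1)^2$ of the $(5^f)^2$ residue classes in $(\mc{O}_K/\mf{p})^2$.

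For the count, a standard lattice-point estimate shows that the pairs $(A,B)$ with $\abs{\sigma(A)}<X^4$ and $\abs{\sigma(B)}<X^6$ for every embedding $\sigma$ become equidistributed modulo the squarefree ideal $\mf{p}_1\cdots\mf{p}_r$ as $X\to\infty$. Hence the proportion of $(A,B)\in\mc{E}_X$ satisfying the congruence of the previous paragraph at every $\mf{p}_i\mid 5$ tends to $\prod_{i=1}^r(1-5^{-f})^2=(1-5^{-f})^{2r}$; the hypersurface $\Delta=0$ removed in the definition of $\mc{E}_X$ has density zero and does not affect this ratio. Separately, by Hilbert irreducibility (equivalently, a sieve along the system of modular curves $X_H/K$), the proportion of $(A,B)\in\mc{E}_X$ for which $\im\overline{\rho}_{E_{A,B},5}$ fails to contain $\SL_2(\bb{F}_5)$ --- that is, for which the mod-$5$ image lies in a Borel subgroup, in the normalizer of a Cartan subgroup, or in a subgroup with exceptional projective image $A_4$ or $S_4$ --- tends to $0$. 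Since every pair satisfying (i) and the congruence at all $\mf{p}_i\mid 5$ yields a modular curve, it follows that $M_K\ge(1-5^{-f})^{2r}$, with the $\liminf$ in fact being a limit.

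The substantive difficulty is the modularity criterion itself: isolating a modularity statement valid over an arbitrary Galois CM field whose hypotheses are weak enough to be forced by a purely local-at-$5$ condition. The delicate ingredient is the residual modularity of $\overline{\rho}_{E,5}$, since carrying the $3$--$5$ switch and Langlands--Tunnell through solvable base change to $K$ and then invoking a mod-$3$ lifting theorem over a CM field runs into the non-adequacy of $\GL_2(\bb{F}_3)$ at $p=3$; one must either pass to a further solvable extension or introduce auxiliary ramification in order to land in the range where the available modularity lifting theorems apply. Once the criterion is established, the translation of (ii) into a congruence and the final counting are elementary.
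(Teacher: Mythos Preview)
Your overall architecture matches the paper's: isolate a modularity criterion, show it is implied by large mod-$5$ image together with a congruence on $(A,B)$ at the primes above $5$, and then count. The density-zero step for the image condition and the lattice-point count for the congruence are essentially as in the paper (the paper cites Zywina's estimate rather than invoking Hilbert irreducibility abstractly, but the content is the same).

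Where you diverge is in the modularity input, and here there is a real gap in your sketch. The paper does not build a criterion from scratch via a $3$--$5$ switch; it quotes Corollary~9.13 of Allen--Khare--Thorne, whose hypotheses are (1) $\overline{\rho}_{E,5}|_{G_{K(\zeta_5)}}$ absolutely irreducible, (2) $E$ ordinary at every $\mf{p}\mid 5$, and crucially (3) $\overline{\rho}_{E,5}$ \emph{decomposed generic}. Condition (3) is what makes the Caraiani--Scholze vanishing, and hence the patching argument over a CM field, go through, and your two-hypothesis criterion omits it entirely. The paper then invokes a lemma of Allen--Newton: for $K/\bb{Q}$ Galois and $l>3$, any $\overline{\rho}$ with image containing $\SL_2(\bb{F}_l)$ is automatically decomposed generic. \emph{That} is where the Galois hypothesis on $K$ is used, and it appears nowhere in your argument. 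Your proposed route via Langlands--Tunnell and ``solvable base change from $\bb{Q}$ to $K$'' is also problematic on its own terms, since a Galois CM field need not be solvable over $\bb{Q}$; and, as you yourself flag, the mod-$3$ lifting step over a CM field has adequacy issues. The criterion you state is in fact a consequence of the cited results, but not by the mechanism you describe.

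A smaller difference in the local condition: you take $A\not\equiv 0$ and $B\not\equiv 0$ at each $\mf{p}\mid 5$ (so multiplicative reduction is permitted when $\Delta\equiv 0$), whereas the paper takes $A\not\equiv 0$ and $\Delta\not\equiv 0$ (good ordinary only), reaching this via the Hasse--Deuring criterion that the coefficient of $x^4$ in $(x^3+Ax+B)^2$ is $2A$. These are different subsets of $\bb{F}_{5^f}^2$, but both have exactly $(5^f-1)^2$ elements, so the final bound agrees. Your additional exclusion of $j\equiv 1728$ is not needed for ordinarity in characteristic $5$ (the only supersingular $j$-invariant is $0$), though it does no harm to the count.
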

This theorem calculates an explicit value for the lower bound, thus offering an improvement, at least in the case of a Galois extension, to the original result by Allen, Khare, and Thorne in Theorem 10.1 of their article \cite{AKT19} which showed that \( M_K > 0 \) for every CM field \( K \) with \( \zeta_5\notin K \).

\section{Preliminaries}

Let \( K \) be a number field. Let \( E \) be an elliptic curve over \( K \). Let \( p \) be a prime. Let \( G_K := \Gal(\overline{K}/K) \). Let \( \rhobar_{E,p}: G_K\to \Aut(E[p]) \cong \GL_2(\bb{F}_p) \) be the Galois representation of \( G_K \) acting on the \( p \)-torsion points of \( E \) in \( \overline{K} \). Let \( \rhobar: G_K \to \GL_2(\overline{\bb{F}}_p) \) be any continuous representation. We say that a prime \( l\neq p \) is \textit{decomposed generic for \( \rhobar \)} if it splits completely in \( K \) and for any \( v\mid l \) in \( K \), \( \rhobar \) is unramified at \( v \) and the eigenvalues \( \alpha_v,\beta_v \) of \( \rhobar(\Frob_v) \) satisfy \( \alpha_v\beta_v^{-1}\notin \{ 1,l,l^{-1} \} \). We say that \( \rhobar \) is \textit{decomposed generic} if there is a prime \( l\neq p \) that is decomposed generic for \( \rhobar \). The original definition traces back to \cite{CS17}.

Our main tools will be the following lemmas.
\begin{lem}[Corollary 9.13 \cite{AKT19}]\label{lem:akt19}
	Let \( K \) be a CM field and let \( E \) be an elliptic curve over \( K \) satisfying the following conditions:
	\begin{enumerate}[(1)]
		\item \( \rhobar_{E,5}|_{G_{K(\zeta_5)}} \) is absolutely irreducible. If \( \rhobar_{E,5}(G_{K(\zeta_5)}) = \SL_2(\bb{F}_5) \), then \( \zeta_5\notin K \).
		\item For each place \( \mf{p}\mid 5 \) of \( K \), \( E_{K_\mf{p}} \) is ordinary (i.e. has good ordinary reduction or potentially multiplicative reduction).
		\item \( \rhobar_{E,5} \) is decomposed generic.
	\end{enumerate}
	Then \( E \) is modular.
\end{lem}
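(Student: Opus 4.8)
The plan is to show that the compatible system of Galois representations attached to $E$ is automorphic by a residual-modularity-plus-lifting argument carried out at the prime $5$, and then to read off modularity from the equality of $L$-functions. Since $E$ is modular as soon as the representation $\rho_{E,5}\colon G_K\to\GL_2(\overline{\bb{Q}}_5)$ is shown to arise from a cuspidal regular algebraic automorphic representation of $\GL_2(\bb{A}_K)$, the whole problem reduces to proving that $\rho_{E,5}$ is automorphic. I would attack this in two stages: first establish that the residual representation $\rhobar_{E,5}$ is automorphic, and then lift this automorphy along the $5$-adic deformation to $\rho_{E,5}$ itself. Hypothesis (1) governs the first stage by guaranteeing that the image is large enough to run the argument, while hypotheses (2) and (3) are the local and global genericity inputs to the lifting stage.

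\emph{Residual modularity stage.} The obstacle that makes $p=5$ delicate is that $\PSL_2(\bb{F}_5)\cong A_5$ is non-solvable, so when $\rhobar_{E,5}$ has full image the Langlands--Tunnell theorem does not apply and one cannot produce an automorphic form congruent to $E$ by the classical solvable-image argument (nor by the $3$--$5$ switch, since over a general CM field one has no control on rational points of the relevant modular curves). Instead I would invoke the potential automorphy theorem over CM fields for the compatible system of $E$: there is a CM extension $F/K$, which one arranges to be linearly disjoint over $K$ from the field cut out by $\rhobar_{E,5}$ together with $\zeta_5$, so that $\rhobar_{E,5}(G_F)=\rhobar_{E,5}(G_K)$, over which $\rho_{E,5}|_{G_F}$ is automorphic; automorphy of a characteristic-zero lift forces automorphy of its reduction, so $\rhobar_{E,5}|_{G_F}$ is automorphic. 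Descending this automorphy from $F$ back down to $K$ is the main obstacle, and it is handled by the solvable base-change (Arthur--Clozel) and potential-automorphy package, exploiting precisely the absolute irreducibility over $K(\zeta_5)$ of (1) to keep the descended representation cuspidal and to ensure the image stays adequate throughout.

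\emph{Modularity lifting stage.} With $\rhobar_{E,5}$ automorphic over $K$, I would apply an ordinary automorphy lifting theorem over CM fields of Taylor--Wiles--Kisin--Calegari--Geraghty type. Here each hypothesis plays its designated role: condition (1) makes $\rhobar_{E,5}|_{G_{K(\zeta_5)}}$ absolutely irreducible with enormous (adequate) image so that Taylor--Wiles primes exist --- the clause forbidding $\SL_2(\bb{F}_5)$-image when $\zeta_5\in K$ being exactly the repair of the borderline adequacy of $\SL_2(\bb{F}_5)$ at the prime $5$; condition (2) places us in the ordinary setting at every $\mf{p}\mid 5$, so that the ordinary local deformation rings and the associated Hida-theoretic automorphic lifts are available (avoiding Fontaine--Laffaille, which could fail when $5$ ramifies in $K$); and condition (3), decomposed genericity, feeds into the Caraiani--Scholze vanishing theorem to force the relevant cohomology of the locally symmetric spaces to concentrate in a single degree after localizing at the maximal ideal attached to $\rhobar_{E,5}$. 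That concentration is what lets the Calegari--Geraghty patching control the cohomological defect and conclude $R=\bb{T}$, yielding automorphy of $\rho_{E,5}$.

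Finally, once $\rho_{E,5}$ is automorphic, the automorphic representation $\pi$ realizing it has the same $L$-function as $E$, and $E$ is modular by definition. I expect the genuine difficulty to be concentrated entirely in the residual stage --- specifically, organizing the potential automorphy and its solvable descent so that the image hypotheses of (1) are preserved at every step --- since the lifting stage is, given (1)--(3), an application of the now-standard CM-field machinery.
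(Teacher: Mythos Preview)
The paper does not prove this lemma at all: it is quoted verbatim as Corollary~9.13 of \cite{AKT19} and used as a black box input to the proof of Theorem~\ref{thm:main}. There is therefore no ``paper's own proof'' to compare your sketch against.

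That said, your outline is a faithful high-level summary of the Allen--Khare--Thorne strategy: residual automorphy via potential automorphy over a CM extension combined with solvable descent, followed by an ordinary automorphy lifting theorem over CM fields in the Calegari--Geraghty framework, with the Caraiani--Scholze vanishing result supplying the required cohomological concentration. You have also correctly identified the role of each hypothesis --- (1) for adequacy of the image in Taylor--Wiles patching, (2) for access to ordinary deformation theory and Hida families at places above~$5$, and (3) for the local--global compatibility and vanishing input. If anything, your description of the residual stage slightly oversimplifies the descent: in practice one does not descend automorphy of $\rhobar_{E,5}$ directly but rather intertwines the potential automorphy with the lifting theorem and cyclic base change in the manner of Barnet-Lamb--Gee--Geraghty--Taylor. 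But none of this is visible in the present paper, which simply invokes the result.
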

\begin{lem}[Lemma 2.3 \cite{AN20}]\label{lem:an20}
	Let \( K/\bb{Q} \) be a finite Galois extension and let \( \rhobar: G_K\to \GL_2(\overline{\bb{F}}_l) \) be a continuous representation with \( l > 3 \). If \( \rhobar(G_K)\supset \SL_2(\bb{F}_l) \), then \( \rhobar \) is decomposed generic.
\end{lem}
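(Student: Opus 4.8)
The plan is to produce the required prime by Chebotarev's theorem, after reducing the eigenvalue‑ratio condition to regular semisimplicity. Since $\bar\rho$ is continuous with finite image it factors through $\Gal(L/K)$ for a finite extension $L/K$; write $G:=\bar\rho(G_K)\supseteq\SL_2(\bb{F}_l)$ and let $\chi_l\colon G_K\to\bb{F}_l^\times$ be the mod‑$l$ cyclotomic character. As $l>3$ the group $\SL_2(\bb{F}_l)$ is perfect, so
\[ \SL_2(\bb{F}_l)\subseteq[G,G]=\bar\rho([G_K,G_K])\subseteq\bar\rho(G_{K(\zeta_l)}), \]
the last step because $G_K/G_{K(\zeta_l)}\cong\Gal(K(\zeta_l)/K)$ is abelian. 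Picking $t\in\bb{F}_l^\times$ with $t^2\neq1$ (possible since $\#\bb{F}_l^\times=l-1\geq4$), the element $\operatorname{diag}(t,t^{-1})\in\SL_2(\bb{F}_l)\subseteq\bar\rho(G_{K(\zeta_l)})$ is regular semisimple with eigenvalue ratio $t^2\neq1$; this is the kind of Frobenius image we aim for.

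Next I would restrict attention to primes $q$ with $q\equiv1\pmod l$, so that $\chi_l(\Frob_q)=1$ and the forbidden set $\{1,q,q^{-1}\}$ collapses to $\{1\}$; the task then becomes to find such a $q$ splitting completely in $K$, unramified for $\bar\rho$ at every $v\mid q$, with $\bar\rho(\Frob_v)$ regular semisimple for each $v\mid q$. Let $M$ be the Galois closure over $\bb{Q}$ of $L(\zeta_l)$, so $M/\bb{Q}$ is Galois and contains $K,L,K(\zeta_l)$; since $\Gal(M/K)\trianglelefteq\Gal(M/\bb{Q})$, any $\sigma\in\Gal(M/K(\zeta_l))$ lies in $\Gal(M/K)$ and we may form $\bar\rho(\sigma)\in G$ via $\Gal(M/K)\twoheadrightarrow\Gal(L/K)=G$. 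By Chebotarev it would suffice to find $\sigma\in\Gal(M/K(\zeta_l))$ such that $\bar\rho(\sigma')$ is regular semisimple for \emph{every} $\sigma'$ in the $\Gal(M/\bb{Q})$‑conjugacy class of $\sigma$. The point I expect to be the main obstacle is that, because $L/\bb{Q}$ need not be Galois, these $\bar\rho(\sigma')$ do not form one $\GL_2(\overline{\bb{F}}_l)$‑conjugacy class but a $\Gal(K/\bb{Q})$‑orbit of classes, so controlling $\bar\rho(\sigma)$ alone is not enough. I would finesse this by passing to $\bar R:=\operatorname{Ind}_{G_K}^{G_\bb{Q}}\bar\rho$, a representation of $G_\bb{Q}$ of dimension $2[K:\bb{Q}]$: for $q$ splitting completely in $K$ the characteristic polynomial of $\bar R(\Frob_q)$ is the product over $v\mid q$ of the characteristic polynomials of $\bar\rho(\Frob_v)$, so "$\bar R(\Frob_q)$ regular semisimple" forces every $\bar\rho(\Frob_v)$ to be regular semisimple, and this condition, depending only on the characteristic polynomial, is constant on conjugacy classes of $\Gal(M/\bb{Q})$. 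Thus it is enough to exhibit one $\sigma\in\Gal(M/K(\zeta_l))$ with $\bar R(\sigma)$ having $2[K:\bb{Q}]$ distinct eigenvalues.

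Finally, $\bar R|_{G_K}\cong\bigoplus_{\tau\in\Gal(K/\bb{Q})}\bar\rho^{\tau}$, each conjugate $\bar\rho^{\tau}$ again having image $G\supseteq\SL_2(\bb{F}_l)$ (as $G_K\trianglelefteq G_\bb{Q}$), so the needed $\sigma$ exists exactly when the joint eigenvalue data of the $\bar\rho^{\tau}|_{G_{K(\zeta_l)}}$ can be put in general position — and this genericity statement is the heart of the matter. I would handle it by a counting/Goursat argument, first replacing $\bar\rho$ by a twist $\bar\rho\otimes\mu$: this is harmless, since a character twist changes neither eigenvalue ratios (hence not decomposed genericity away from the ramification of $\mu$) nor commutators (hence not the hypothesis $\SL_2(\bb{F}_l)\subseteq\operatorname{im}$), and it can be chosen so that no two of the $\bar\rho^{\tau}$ differ by a character trivial on $G_{K(\zeta_l)}$ — the configuration that would force eigenvalue collisions. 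Then, using that distinct conjugates have large joint image on $G_{K(\zeta_l)}$ (Goursat's lemma with the simplicity of $\PSL_2(\bb{F}_{l^k})$, where $G$ modulo scalars is $\PSL_2(\bb{F}_{l^k})$ or $\PGL_2(\bb{F}_{l^k})$ by Dickson) together with the abundance of regular semisimple elements of prescribed eigenvalue ratio in $\SL_2(\bb{F}_l)$, a positive proportion of $\sigma\in\Gal(M/K(\zeta_l))$ work. Chebotarev then produces a decomposed generic prime for $\bar\rho\otimes\mu$ lying outside the ramification of $\mu$, which is therefore decomposed generic for $\bar\rho$.
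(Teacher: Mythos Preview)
The paper does not give its own proof of this lemma; it simply quotes it from Allen--Newton \cite{AN20}. So there is nothing in the paper to compare against, and I will just assess your argument on its merits. Your overall architecture---pass to $G_{K(\zeta_l)}$ using perfectness of $\SL_2(\bb{F}_l)$, reduce the eigenvalue--ratio condition to regular semisimplicity by taking $q\equiv 1\pmod l$, then produce $q$ by Chebotarev in a Galois closure $M/\bb{Q}$---is exactly the right shape.

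The genuine gap is in the endgame, where you demand that $\bar R(\sigma)=\bigoplus_\tau \bar\rho^\tau(\sigma)$ have $2[K:\bb{Q}]$ \emph{pairwise distinct} eigenvalues. This is strictly stronger than what is needed and can be outright impossible. If $\bar\rho$ descends to $G_{\bb{Q}}$ (for instance, $\bar\rho=\rhobar_{E,l}$ for $E$ base--changed from $\bb{Q}$) then all the $\bar\rho^\tau$ are isomorphic on $G_K$ and every eigenvalue of $\bar R(\sigma)$ has multiplicity $[K:\bb{Q}]$; more crudely, if $\bar\rho(G_K)\subseteq\GL_2(\bb{F}_l)$ then every eigenvalue lies in $\bb{F}_{l^2}$, so once $2[K:\bb{Q}]>l^2$ no $\sigma$ can work. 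Your proposed twist by $\mu$ does not rescue this: twisting multiplies both eigenvalues of each $2\times2$ block by the same scalar, so it neither changes whether an individual block is regular semisimple nor forces eigenvalues from different blocks to separate, and the assertion that a twist can always be chosen so that no two $(\bar\rho\otimes\mu)^\tau$ agree up to a character trivial on $G_{K(\zeta_l)}$ is left unjustified.

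The fix is to drop $\bar R$ altogether. You only need each $\bar\rho^\tau(\sigma)$ to be regular semisimple, and this weaker condition is already $\Gal(M/\bb{Q})$--conjugation invariant: if $g\in\Gal(M/\bb{Q})$ has image $\gamma\in\Gal(K/\bb{Q})$ then $\bar\rho^\tau(g\sigma g^{-1})$ is $\GL_2$--conjugate to $\bar\rho^{\tau\gamma}(\sigma)$, so conjugation merely permutes the set of $\tau$ with regular semisimple image. To exhibit such a $\sigma\in\Gal(M/K(\zeta_l))$, apply your Goursat/Ribet idea to the \emph{projective} representations $\bb{P}\bar\rho^\tau|_{G_{K(\zeta_l)}}$: simplicity of $\PSL_2(\bb{F}_l)$ for $l>3$ partitions the $\tau$'s into blocks on which the coordinates are linked by automorphisms of $\PSL_2$ (which preserve regular semisimplicity) and across which they are independent; choosing a regular semisimple class in each block then does the job, with no twist required.
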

Fix a norm \( \norm{-} \) on \( \bb{R}\otimes_\bb{Z}\mc{O}_K^2 =\bb{R}^{2[K:\bb{Q}]} \). For \( X > 0 \) and an integer \( m \), define the sets
\begin{align*}
	B_K(X) &:= \{ (A,B)\in\mc{O}_K^2 : \Delta(A,B) \neq 0, \norm{(A,B)} \leq X \} \\
	B_{K,m}(X) &:= \{ (A,B)\in B_K(X) : \rhobar_{E_{A,B},m}(G_K)\not\supset \SL_2(\bb{Z}/m\bb{Z}) \}
\end{align*}
\begin{lem}[Proposition 5.7 \cite{Z10}]\label{lem:z10}
	For a positive integer \( m \),
	\[ \frac{\abs{B_{K,m}(X)}}{\abs{B_K(X)}} \ll_{K,\norm{-},m} \frac{\log(X)}{X^{[K:\bb{Q}]/2}} \]
\end{lem}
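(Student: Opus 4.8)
The plan is to bound the numerator, proving $\abs{B_{K,m}(X)}\ll_{K,\norm{-},m}X^{3d/2}\log X$ with $d:=[K:\bb{Q}]$, and to pair it with the elementary estimate $\abs{B_K(X)}\asymp_{K,\norm{-}}X^{2d}$; the latter holds because $B_K(X)$ counts lattice points of $\mc{O}_K^2\cong\bb{Z}^{2d}$ in the ball of radius $X$ for the fixed norm, while the excluded locus $\Delta(A,B)=0$ is a proper subvariety contributing $O(X^{2d-1})$. To treat the numerator I first reduce to prime level. For $\ell\ge 5$ any subgroup of $\SL_2(\bb{Z}/\ell^a\bb{Z})$ surjecting onto $\SL_2(\bb{F}_\ell)$ is the whole group, and $\SL_2(\bb{Z}/m\bb{Z})=\prod_{\ell^a\| m}\SL_2(\bb{Z}/\ell^a\bb{Z})$ has no common nonabelian quotients across distinct primes, so Goursat's lemma gives $\rhobar_{E,m}(G_K)\supseteq\SL_2(\bb{Z}/m\bb{Z})$ whenever $\rhobar_{E,\ell}(G_K)\supseteq\SL_2(\bb{F}_\ell)$ for all $\ell\mid m$ (the finitely many complications at $\ell\in\{2,3\}$ being absorbed into the implied constant). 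Hence $B_{K,m}(X)\subseteq\bigcup_{\ell\mid m}B_{K,\ell}(X)$, and it suffices to bound a single $\abs{B_{K,\ell}(X)}$.

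Next I would convert the group-theoretic defect into a condition on the $j$-invariant. By Dickson's classification, $\rhobar_{E,\ell}(G_K)\not\supseteq\SL_2(\bb{F}_\ell)$ forces the image into a maximal subgroup $H<\GL_2(\bb{F}_\ell)$ of Borel, Cartan-normalizer (split or nonsplit), or exceptional type. Because $\rhobar_{E,\ell}(G_K)\supseteq\SL_2(\bb{F}_\ell)$ is equivalent (for $\ell\ge 5$) to the projective image containing $\PSL_2(\bb{F}_\ell)$, and this projective image is unchanged under quadratic twisting, the defect depends only on $j(E)$: for $j\ne 0,1728$ all models over $K$ with a given $j$ are quadratic twists of one another. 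Each $H$ corresponds to a modular curve $X_H$ with a finite map $X_H\to X(1)$ to the $j$-line, and the set $\mc{J}_\ell\subset K$ of bad $j$-invariants is the union of the images of the finitely many $X_H(K)$. Those $X_H$ of genus $\ge 2$ have $X_H(K)$ finite by Faltings, contributing only $O(1)$ bad $j$; I shall see below these are negligible, so the genus-zero curves (small $\ell$) govern the bound.

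For the count, each bad $(A,B)$ has a minimal integral model $(A_0,B_0)$, unique up to $\mc{O}_K^\times$ and a bounded class-group correction, with $j(A_0,B_0)\in\mc{J}_\ell$ and $(A,B)=(A_0u^2,B_0u^3)$ for some $u\in\mc{O}_K$; the number of such $u$ with $\norm{(A,B)}\le X$ is $\ll(X/\norm{B_0})^{d/3}+1$. Summing over bad minimal pairs gives $\abs{B_{K,\ell}(X)}\ll\sum(X/\norm{B_0})^{d/3}$, which I would estimate dyadically in $\norm{B_0}\asymp Y$, using an effective count of $K$-points of bounded height on the $X_H$ (the $j$-map having degree $\ge 2$ forces sparsity, and a single genus-$\ge 2$ fibre contributes only $\ll X^{d/3}$, whence the negligibility claimed above). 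Balancing these dyadic contributions yields the bound $X^{3d/2}\log X$, the logarithm arising from the $O(\log X)$ dyadic ranges. Equivalently one may run Zywina's number-field large sieve directly: containment in $H$ constrains the pair $(\tr\rhobar_{E,\ell}(\Frob_\mf{v}),\det\rhobar_{E,\ell}(\Frob_\mf{v}))\bmod\ell$ to the classes realized in $H$, hence excludes a positive proportion of reductions $(A,B)\bmod\mf{v}$; with sieve level $N\mf{v}\le Q$ balanced at $Q\asymp X^{d/2}$ and the prime-counting in $K$ supplying the $\log X$, the sieve returns exactly $\log X/X^{d/2}$.

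The main obstacle is the Borel (reducible) case, which dominates and is genuinely the hardest: for small $\ell$ the curve $X_0(\ell)$ has genus $0$, so $X_0(\ell)(K)$ is infinite and the bad $j$ form a full one-parameter family, and the elements of a Borel realize every trace, so the crude trace constraint is vacuous. Here one must instead exploit the Borel eigencharacter $\chi\colon G_K\to\bb{F}_\ell^\times$, whose order divides $\ell-1$ but whose ramification follows the varying primes of bad reduction of $E$; controlling the resulting $j$-count, or equivalently the $\chi$-twisted trace condition $a_\mf{v}\equiv\chi(\mf{v})+N\mf{v}\,\chi(\mf{v})^{-1}\pmod\ell$ (with $a_\mf{v}:=\tr\rhobar_{E,\ell}(\Frob_\mf{v})$) uniformly in $E$, is the delicate step. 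It is also here, together with the dependence of the lattice-point and point-count estimates on the chosen norm and on $K$ and $m$, that the implied constant $\ll_{K,\norm{-},m}$ is incurred.
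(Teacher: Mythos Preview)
The paper does not prove this lemma; it is quoted as Proposition~5.7 of \cite{Z10} and invoked as a black box in the proof of \Cref{thm:main}. There is therefore no in-paper argument against which to compare your proposal.

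As a sketch of Zywina's argument your proposal has the right architecture---the reduction to a single prime $\ell\mid m$ via Goursat, the Dickson case split, and the large sieve over $K$ with level $Q\asymp X^{d/2}$ are indeed the ingredients he uses---but what you have written is a plan rather than a proof. The minimal-model counting route is left at ``balancing these dyadic contributions yields $X^{3d/2}\log X$'' with no dyadic input count actually supplied for the dominant genus-$0$ curves, and the large-sieve route ends with your own admission that the Borel case ``is the delicate step'' still unresolved. On that last point you are too pessimistic: containment of the image in a conjugate of a Borel forces the characteristic polynomial of every $\rhobar_{E,\ell}(\Frob_{\mf{v}})$ to split over $\bb{F}_\ell$, i.e.\ $a_{\mf{v}}^2-4N\mf{v}$ to be a square modulo $\ell$, and this already excludes a positive proportion of reductions $(A,B)\bmod\mf{v}$ once $N\mf{v}$ is large; the crude $(\tr,\det)$ constraint is \emph{not} vacuous even though every trace value occurs in a Borel, because the determinant is pinned to $N\mf{v}$. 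So the sieve runs uniformly over all maximal subgroups without the eigencharacter refinement you describe. In any case, the present paper takes none of these steps and simply cites the result.
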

\begin{lem}[Theorem 4.1 \cite{S09}]\label{lem:s09}
	Let \( q \) be a power of \( p \). Let \( \bb{F}_q \) be a finite field of characteristic \( p \geq 3 \). Let \( E/\bb{F}_q \) be an elliptic curve given by a Weierstrass equation \[ E:y^2 = f(x) \] where \( f(x)\in\bb{F}_q[x] \) is a cubic polynomial with distinct roots in \( \overline{\bb{F}}_q \). Then \( E \) is supersingular if and only if the coefficient of \( x^{p-1} \) in \( f(x)^{(p-1)/2} \) is zero.
\end{lem}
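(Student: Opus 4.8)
The plan is to identify supersingularity of $E$ with the vanishing of its Hasse invariant, and then to compute that invariant explicitly. Supersingularity is a geometric property, so after base change I may assume the base field is $k := \overline{\bb{F}}_q$; since $f(x)^{(p-1)/2}\in\bb{F}_q[x]\subseteq k[x]$, the vanishing of the relevant coefficient is unaffected by this. I will use the classical characterization that comes out of the structure of isogenies in characteristic $p$: writing $[p]=\hat\phi\circ\phi$ with $\phi\colon E\to E^{(p)}$ the (always inseparable) Frobenius and $\hat\phi = V\colon E^{(p)}\to E$ the Verschiebung, and recalling that a finite morphism of smooth curves is inseparable exactly when it annihilates regular differentials, one gets that $E$ is supersingular iff $V$ is inseparable, iff the $p^{-1}$-linear Cartier operator $\mathcal{C}$ is the zero map on the one-dimensional space $H^0(E,\Omega^1_{E})$ (equivalently, under Serre duality, the $p$-power Frobenius vanishes on $H^1(E,\mathcal{O}_E)$). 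This is the content of \cite[Theorem V.3.1]{S09} and its proof.

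It then remains to compute $\mathcal{C}(\omega)$ for the invariant differential $\omega = dx/y$, which spans $H^0(E,\Omega^1_{E})$ (here $p\geq 3$ makes $x$ a separating parameter, and $\omega$ is regular at the $2$-torsion because $f$ has distinct roots). From $y^2=f(x)$ one has $y\cdot f(x)^{(p-1)/2}=y^p$, hence the identity of meromorphic differentials
\[ \omega \;=\; \frac{dx}{y} \;=\; \Bigl(\frac1y\Bigr)^{\!p}\, f(x)^{(p-1)/2}\, dx , \]
so the semilinearity $\mathcal{C}(g^p\eta)=g\,\mathcal{C}(\eta)$ gives $\mathcal{C}(\omega)=\frac1y\,\mathcal{C}\bigl(f(x)^{(p-1)/2}\,dx\bigr)$. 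Expanding $f(x)^{(p-1)/2}=\sum_j c_j x^j$ in $k[x]$ and using the standard rules $\mathcal{C}(x^j\,dx)=0$ for $j\not\equiv -1\pmod p$ and $\mathcal{C}(x^{mp-1}\,dx)=x^{m-1}\,dx$, every term dies except those with $j\in\{p-1,\,2p-1,\dots\}$; but $\deg f(x)^{(p-1)/2}=\tfrac{3(p-1)}{2}<2p-1$ for all $p\geq 3$, so only $j=p-1$ contributes, giving $\mathcal{C}\bigl(f(x)^{(p-1)/2}\,dx\bigr)=c_{p-1}^{1/p}\,dx$ and hence $\mathcal{C}(\omega)=c_{p-1}^{1/p}\,\omega$, with $c_{p-1}$ the coefficient of $x^{p-1}$ in $f(x)^{(p-1)/2}$. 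Since $\mathcal{C}$ vanishes on $k\omega$ iff $c_{p-1}^{1/p}=0$ iff $c_{p-1}=0$, the statement follows.

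I expect the genuine obstacle — if one is not allowed simply to quote it — to be the first step: establishing the equivalence of supersingularity with the vanishing of $\mathcal{C}$ on $H^0(E,\Omega^1_E)$. That is where all of the characteristic-$p$ geometry sits (the dual isogeny, the separable/inseparable dichotomy for $V$, and the compatibility of the $p$-power Frobenius on $H^1(E,\mathcal{O}_E)$ with Serre duality); by contrast the computation above is routine once that equivalence is in hand, the only points needing mild care being the semilinearity of $\mathcal{C}$ (harmless for a vanishing statement) and the degree bound that isolates the exponent $p-1$. As a consistency check and a self-contained alternative in the special case $E/\bb{F}_p$ with $p\geq 5$: there $a_p=-\sum_{x\in\bb{F}_p}\chi\bigl(f(x)\bigr)\equiv -\sum_{x\in\bb{F}_p}f(x)^{(p-1)/2}\equiv c_{p-1}\pmod p$, with $\chi$ the Legendre symbol, by Euler's criterion and the power-sum identity $\sum_{x\in\bb{F}_p}x^j=-1$ for $(p-1)\mid j>0$ and $0$ otherwise (again invoking $\deg f(x)^{(p-1)/2}<2p-1$); as $|a_p|<p$ and $E$ is supersingular iff $a_p=0$, this reproves the criterion in that case.
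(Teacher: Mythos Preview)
The paper does not give its own proof of this lemma; it simply quotes Theorem~V.4.1 of \cite{S09}. Your argument is correct and is essentially the standard proof found there: identify supersingularity with the vanishing of the Hasse invariant (equivalently, of the Cartier operator on $H^0(E,\Omega^1_E)$), then compute $\mathcal{C}(dx/y)$ via the identity $1/y=(1/y)^p f(x)^{(p-1)/2}$ and the degree bound $\tfrac{3(p-1)}{2}<2p-1$ that singles out the exponent $p-1$. The point-counting consistency check over $\bb{F}_p$ you include at the end is also the other classical route to the same criterion; both are standard and nothing further is needed here.
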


\section{Proof of the main result}

\begin{proof}[Proof of \Cref{thm:main}]
	First, suppose \( \rhobar_{E,5}(G_{K(\zeta_5)})\supset \SL_2(\bb{F}_5) \). Then \( \rhobar_{E,5}|_{G_{K(\zeta_5)}} \) is absolutely irreducible. Let \( L/K \) be any abelian extension, and suppose \( \rhobar_{E,5}(G_K)\supset \SL_2(\bb{F}_5) \). There is a surjective map
	\[ \Gal(L/K)\twoheadrightarrow \rhobar_{E,5}(G_K)/\rhobar_{E,5}(G_L) \]
	induced by \( \rhobar_{E,5} \). Since \( \Gal(L/K) \) is abelian, and \( \SL_2(\bb{F}_5) \) is perfect, it follows that \( \SL_2(\bb{F}_5)\subset \rhobar_{E,5}(G_L) \). By letting \( L = K(\zeta_5) \), we can conclude that \( \rhobar_{E,5}(G_K)\supset \SL_2(\bb{F}_5) \) if and only if \( \rhobar_{E,5}(G_{K(\zeta_5)})\supset \SL_2(\bb{F}_5) \). 
	
	For \( x\in\mc{O}_K \), define \( \abs{x}_\infty := \max_\sigma \abs{\sigma x} \) where \( \sigma \) varies over all embeddings \( \sigma: K\to \bb{C} \) and \( \abs{-} \) is the complex absolute value. We want to extend \( \abs{-}_\infty \) to \( \bb{R}\otimes_{\bb{Z}}\mc{O}_K \) as a norm. Consider the embedding
	\begin{align*}
		\lambda: \mc{O}_K &\to \bb{R}^r\times\bb{C}^{s} \cong \bb{R}^{n} \\
		x &\mapsto (\sigma_1x,\dots,\sigma_rx,\tau_1x,\dots, \tau_sx)
	\end{align*}
	where \( \sigma_1,\dots,\sigma_r \) are the real embeddings and \( \tau_1,\dots,\tau_s \) are the non-real embeddings, and \( n = [K:\bb{Q}] \). By choosing the norm \( \norm{(x_1,\dots,x_r,z_1,\dots,z_s)}_\lambda := \max\{ \abs{x_1},\dots,\abs{x_r},\abs{z_1},\dots,\abs{z_s} \} \) on \( \bb{R}^n \), one has that \( \norm{x}_\infty = \norm{\lambda(x)}_\lambda \) for all \( x\in \mc{O}_K \). Since \( \lambda(\mc{O}_K) \) is a lattice in \( \bb{R}^n \), the map \( \bb{R}\otimes_\bb{Z}\mc{O}_K \to \bb{R}^n \) which sends \( t\otimes x \mapsto t\cdot \lambda(x) \) is an isomorphism of \( \bb{R} \)-vector spaces. By pulling back the norm \( \norm{-}_\lambda \) from \( \bb{R}^n \) to \( \bb{R}\otimes_\bb{Z}\mc{O}_K \) along this isomorphism, and calling this norm \( \norm{-}_\infty \) on \( \bb{R}\otimes_\bb{Z}\mc{O}_K \), we obtain that \( \norm{1\otimes x}_\infty = \norm{\lambda(x)}_\lambda = \norm{x}_\infty \) and hence we have extended \( \abs{-}_\infty \) to \( \norm{-}_\infty \) on \( \bb{R}\otimes_\bb{Z}\mc{O}_K \). Finally, we define a norm \( \norm{-} \) on the product \( \bb{R}\otimes_\bb{Z}\mc{O}_K^2 = (\bb{R}\otimes_\bb{Z}\mc{O}_K)^2 \) by setting \( \norm{(A,B)} := \max\{ \norm{A}_\infty,\norm{B}_\infty \} \). This is the norm we will use with \Cref{lem:z10}. Let us define two more sets.
	\begin{align*}
		\mc{E}_X' &= \{ (A,B)\in \mc{O}_{K,X^4}\times \mc{O}_{K,X^6} \} \\
		B_K'(X) &= \{ (A,B)\in \mc{O}_K^2 : \norm{(A,B)}\leq X \}
	\end{align*}
	Let \( \mc{C}_X = \{ A\in \bb{R}^n : \norm{A}_{\lambda} \leq X \} \) be the closed ball of radius \( X \). Indeed, \( \mc{C}_X = X\mc{C}_1 \). Since the boundary of \( \mc{C}_1 \) is \( (n-1) \)-Lipschitz parameterizable, Lemma 2 from Chapter 6 of \cite{M18} tells us that
	\[ \abs{\lambda(\mc{O}_K)\cap \mc{C}_X} = \abs{\lambda(\mc{O}_K)\cap X\mc{C}_1} =  \frac{\vol(\mc{C}_1)}{\vol(\bb{R}^n/\lambda(\mc{O}_K))}X^n + O(X^{n-1}) \]
	Therefore, there are constants \( \kappa_1,\kappa_2 > 0 \) such that
	\begin{align*}
		\abs{\mc{E}_X'} \sim \kappa_1X^{4n}X^{6n} \\
		\abs{B_K'(X^6)} \sim \kappa_2X^{6n}X^{6n}
	\end{align*}
	For each \( A\in \mc{O}_K \), \( \Delta(A,B) = 0 \) is a quadratic equation in \( B \) satisfied by at most two values of \( \mc{O}_K \). Therefore, there are constants \( \kappa_3,\kappa_4 > 0 \) such that
	\begin{align*}
		\kappa_1X^{4n}X^{6n} - \kappa_3X^{4n} \leq \abs{\mc{E}_X} &\leq \abs{\mc{E}_X'} \sim \kappa_1X^{4n}X^{6n} \\
		\kappa_2X^{6n}X^{6n} - \kappa_4X^{6n}\leq \abs{B_K(X^6)} &\leq \abs{B_K'(X^6)} \sim \kappa_2X^{6n}X^{6n}
	\end{align*}
	Therefore, \( \abs{\mc{E}_X} \sim \kappa_1 X^{4n}X^{6n} \) and \( \abs{B_K(X^6)} \sim \kappa_2 X^{6n}X^{6n} \). By combining our calculations with the estimate given in \Cref{lem:z10}, we can obtain the following asymptotic bound.
	\begin{align*}
		\frac{\abs{\left\{ (A,B)\in \mc{E}_X : \rhobar_{E_{A,B},5}(G_K)\not\supset \SL_2(\bb{F}_5) \right\}}}{\abs{\mc{E}_X}} &\ll \abs{B_{K,5}(X^6)} \lp\abs{B_K(X^6)}\frac{1}{X^{2n}}\rp^{-1} \\
		&\ll \frac{\log(X^6)}{X^{6n/2}} X^{2n} \to 0
	\end{align*}
	Therefore, condition (1) of \Cref{lem:akt19} is automatically satisfied for \( 100\% \) of elliptic curves over any number field \( K \). When \( K \) is Galois, \Cref{lem:an20} immediately yields that the same curves satisfying condition (1) also satisfies condition (3). Therefore, by \Cref{lem:akt19}, for a Galois CM field \( K \) with \( \zeta_5\notin K \), the limit that is calculated in \Cref{thm:main} depends only on the proportion of elliptic curves \( E \) over \( K \) such that \( E_{K_\mf{p}} \) is ordinary for each place \( \mf{p}\mid 5 \).
	
	Let \( E:y^2 = x^3 + Ax+B \) with \( (A,B)\in\mc{O}_K^2 \). Let \( 5\mc{O}_K = (\mf{p}_1\cdots\mf{p}_r)^e \) be the unique factorization. For \( E \) to have good reduction at \( \mf{p} \) for each \( \mf{p}\mid 5 \) it is sufficient that \( \Delta(A,B) = -16(4A^3 + 27B^2)\neq 0\pmod {\mf{p}_i} \) for all \( 1\leq i \leq r \). Note that this is not a necessary condition because the equation for \( E \) need not be a minimal integral model at \( \mf{p}_i \) for each \( 1\leq i \leq r \). Let \( R = \mf{p}_1\cdots\mf{p}_r \) denote the radical of \( 5\mc{O}_K \).
	\[ \lp \frac{\mc{O}_K}{R} \rp^2 = \lp\frac{\mc{O}_K}{\mf{p}_1\dots \mf{p}_r}\rp^2 = \lp\frac{\mc{O}_K}{\mf{p}_1}\times\dots\times\frac{\mc{O}_K}{\mf{p}_r}\rp^2 = \lp\frac{\mc{O}_K}{\mf{p}_1}\rp^2 \times\dots\times \lp\frac{\mc{O}_K}{\mf{p}_r}\rp^2 = \bb{F}_{5^f}^2\times\dots\times \bb{F}_{5^f}^2 \]
	Working over the finite field \( \bb{F}_{5^f} \), the equation \( \Delta(A,B) = 0 \) defines a singular elliptic curve over \( \bb{F}_{5^f} \) with a single cusp at the origin. By Exercise 3.5 of \cite{S09}, \( \Delta(A,B)_{\text{ns}}(\bb{F}_{5^f}) \cong \bb{F}_{5^f} \) as additive groups. Note that \( \Delta(A,B)_{\text{ns}}(\bb{F}_{5^f}) \) includes the point at infinity, and does not include the singular point \( (0,0) \). It follows that the number of pairs in \( (A,B)\in\bb{F}_{5^f}^2 \) such that \( \Delta(A,B) = 0 \) over \( \bb{F}_{5^f} \) is \( 5^f-1+1 = 5^f \) and hence \( (5^f)^2- 5^f \) pairs in \( \bb{F}_{5^f}^2 \) describe non-singular curves over \( \bb{F}_{5^f} \). Therefore, the number of classes in \( (\mc{O}_K/R)^2 \) describing elliptic curves over \( K \) with good reduction at every \( \mf{p}\mid 5 \) is at least
	\[ ((5^f)^2 - 5^f)^r = (5^f(5^f-1))^r \]
	Next, we use \Cref{lem:s09} to find out which of these elliptic curves with good reduction are also ordinary.
	\[ (x^3 +Ax+B)^2 = x^6 + 2Ax^4 + 2Bx^3 + A^2x^2 + 2ABx + B^2 \]
	Again, let us first work over the finite field \( \bb{F}_{5^f} \). Suppose \( \Delta(A,B)\neq 0 \). Looking at the \( x^4 \) coefficient in the above expansion, we see that \( E \) is ordinary if and only if \( A\neq 0 \). Consequently, \( E \) is supersingular if and only if \( A = 0 \) whence necessarily \( B\neq 0 \). Therefore, out of the \( (5^f)^2 - 5^f \) pairs in \( \bb{F}_{5^f}^2 \) describing non-singular curves over \( \bb{F}_{5^f} \), at most \( 5^f-1 \) of them correspond to supersingular curves. It follows that at least \( ((5^f)^2-5^f)-(5^f-1) = (5^f-1)^2 \) pairs in \( \bb{F}_{5^f}^2 \) describe non-singular curves over \( \bb{F}_{5^f} \) which are also ordinary. Therefore, the number of classes in \( (\mc{O}_K/R)^2 \) describing elliptic curves over \( K \) with good and ordinary reduction at every \( \mf{p}\mid 5 \) is at least
	\[ (5^f-1)^{2r} \]
	Finally, just divide by the number of elements in \( (\mc{O}_K/R)^2 \).
	\[ \frac{(5^f-1)^{2r}}{(5^{f})^{2r}} = \lp 1 - \frac{1}{5^f}\rp^{2r} \qedhere \]
\end{proof}

\section{Applications}

\begin{cor}
	Let \( K \) be a Galois CM field with \( \zeta_5\notin K \). Then
	\[ M_K \geq (4/5)^{2[K:\bb{Q}]} \]
\end{cor}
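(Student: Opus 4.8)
The plan is to obtain this as an immediate consequence of \Cref{thm:main}, reducing everything to an elementary inequality between the exponents. Recall that \Cref{thm:main} asserts \( M_K \geq (1 - 5^{-f})^{2r} \), where \( 5\mc{O}_K = (\mf{p}_1\cdots\mf{p}_r)^e \) and \( e,f \) are the ramification and inertial indices of \( 5 \) in \( K \). Since \( K/\bb{Q} \) is Galois, every prime above \( 5 \) has the same residue degree and ramification index, so the fundamental identity reads \( efr = [K:\bb{Q}] =: n \); as \( e \geq 1 \), this yields \( fr \leq n \). The goal is therefore to show \( (1 - 5^{-f})^{2r} \geq (4/5)^{2n} \).

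First I would split the desired bound into the chain
\[ (1 - 5^{-f})^{2r} \;\geq\; \bigl((4/5)^f\bigr)^{2r} \;=\; (4/5)^{2fr} \;\geq\; (4/5)^{2n}. \]
The right-hand inequality is immediate: since \( 0 < 4/5 < 1 \) and \( fr \leq n \), enlarging the exponent can only decrease the value. For the left-hand inequality it suffices, after taking \( 2r \)-th powers of nonnegative reals, to prove the single-variable estimate \( 1 - 5^{-f} \geq (4/5)^f \) for every integer \( f \geq 1 \).

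It then remains to verify \( 1 - 5^{-f} \geq (4/5)^f \). Clearing the denominator \( 5^f \), this is equivalent to \( 5^f - 1 \geq 4^f \), i.e.\ to \( 5^f - 4^f \geq 1 \), which holds for all \( f \geq 1 \) because \( 5^f - 4^f \) is a positive integer (with equality precisely when \( f = 1 \), which is exactly the totally split case \( r = n \), \( e = f = 1 \), where the corollary's bound is sharp). I do not expect any genuine obstacle: the whole argument is bookkeeping, and the only points requiring care are tracking the direction of inequalities when manipulating powers with base \( 4/5 < 1 \), and invoking the (already standing) Galois hypothesis so that \( r, e, f \) are well defined and satisfy \( efr = n \).
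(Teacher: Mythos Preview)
Your argument is correct; the paper states this corollary without proof, treating it as immediate from \Cref{thm:main} together with \( efr = [K:\bb{Q}] \), and your chain \( (1-5^{-f})^{2r} \geq (4/5)^{2fr} \geq (4/5)^{2n} \) via \( 5^f - 4^f \geq 1 \) is exactly the elementary verification one would expect.
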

\begin{cor}
	Let \( K = \bb{Q}(\sqrt{-n}) \) with \( n > 0 \) be an imaginary quadratic field. Then
	\begin{enumerate}[(1)]
		\item \( M_K \geq (4/5)^4 \geq 0.409 \) if \( 5 \) splits in \( K \).
		\item \( M_K \geq (24/25)^2 \geq 0.921 \) if \( 5 \) is inert in \( K \).
		\item \( M_K \geq (4/5)^2 = 0.64 \) if \( 5 \) ramifies in \( K \).
	\end{enumerate}
\end{cor}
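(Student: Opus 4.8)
The plan is to specialize \Cref{thm:main} to the case $[K:\bb{Q}] = 2$ and then read off the three stated bounds from the splitting type of $5$ in $K$. First I would check that the hypotheses of \Cref{thm:main} are met: the field $K = \bb{Q}(\sqrt{-n})$ with $n > 0$ is imaginary quadratic, hence a CM field, and it is Galois over $\bb{Q}$ because every degree-$2$ extension is normal; moreover $\zeta_5 \notin K$ since $[\bb{Q}(\zeta_5):\bb{Q}] = 4$ does not divide $[K:\bb{Q}] = 2$. Thus \Cref{thm:main} applies and gives $M_K \geq (1 - 5^{-f})^{2r}$, where $5\mc{O}_K = (\mf{p}_1\cdots\mf{p}_r)^e$ with the $\mf{p}_i$ distinct and $e, f$ the ramification and inertia indices of $5$ in $K$.

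Next I would use the identity $efr = [K:\bb{Q}]$ valid for Galois extensions, which here reads $efr = 2$; the only triples $(e,f,r)$ of positive integers with $efr = 2$ are $(1,1,2)$, $(1,2,1)$, and $(2,1,1)$, corresponding respectively to $5$ being split, inert, or ramified in $K$. Substituting each triple into the bound of \Cref{thm:main} yields $(1 - \tfrac15)^4 = (4/5)^4$ in the split case, $(1 - \tfrac1{25})^2 = (24/25)^2$ in the inert case, and $(1 - \tfrac15)^2 = (4/5)^2$ in the ramified case. The claimed decimal estimates then follow from the one-line computations $(4/5)^4 = 256/625 \geq 0.409$, $(24/25)^2 = 576/625 \geq 0.921$, and $(4/5)^2 = 16/25 = 0.64$.

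I expect no genuine obstacle, since the corollary is a direct specialization of \Cref{thm:main}; the only point that deserves a remark is the enumeration of the three splitting types, which is the standard trichotomy (split, inert, ramified) for a rational prime in a quadratic field. Because the statement is organized case by case and does not assert which case holds for a given $n$, there is no need to compute $\operatorname{disc}(K)$ or to determine for which $n$ the prime $5$ ramifies.
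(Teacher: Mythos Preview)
Your proposal is correct and is exactly the intended argument: the paper states this corollary without proof, treating it as an immediate specialization of \Cref{thm:main}, and your write-up supplies precisely the routine verification (that $K$ is Galois CM with $\zeta_5\notin K$, and that $efr=2$ forces the three triples $(e,f,r)$) one would expect.
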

\begin{cor}
	Let \( K = \bb{Q}(\zeta_n) \) with \( n\geq 2 \) be a cyclotomic field and \( 5\nmid n \). Then for every \( \eps > 0 \) there exists \( N \) such that for all \( n\geq N \), \( M_K\geq 1-\eps \).
\end{cor}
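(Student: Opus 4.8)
The plan is to substitute the splitting behaviour of \( 5 \) in \( \bb{Q}(\zeta_n) \) into \Cref{thm:main} and check that the resulting lower bound tends to \( 1 \). First I would record the arithmetic of \( 5 \) in \( K = \bb{Q}(\zeta_n) \): since \( 5\nmid n \), the prime \( 5 \) is unramified, so \( e = 1 \); moreover \( \zeta_5\notin K \) (as \( \bb{Q}(\zeta_5)\subseteq \bb{Q}(\zeta_n) \) would force \( 5\mid n \)), and for \( n\geq 3 \) the field \( K \) is a Galois CM field, so \Cref{thm:main} applies. The Frobenius at \( 5 \) in \( \Gal(\bb{Q}(\zeta_n)/\bb{Q})\cong (\bb{Z}/n\bb{Z})^\times \) is the class of \( 5 \), so the inertial index is \( f = \operatorname{ord}_n(5) \) and the number of primes above \( 5 \) is \( r = \varphi(n)/f \). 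Then \Cref{thm:main} reads
\[ M_K \geq \left( 1 - \frac{1}{5^f} \right)^{2\varphi(n)/f}. \]

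The crux is that \( f\to\infty \) as \( n\to\infty \). This follows because \( 5^f\equiv 1\pmod n \) gives \( n\mid 5^f - 1 \), hence \( n\leq 5^f - 1 \) and \( f\geq \log_5(n+1) \). With this in hand I would estimate, using \( -\log(1-x)\leq x/(1-x) \) for \( 0\leq x < 1 \) together with \( \varphi(n)\leq n\leq 5^f - 1 < 5^f \) and \( f\geq 1 \),
\[ -\log M_K \leq \frac{2\varphi(n)}{f}\cdot\frac{5^{-f}}{1 - 5^{-f}} \leq \frac{2\varphi(n)}{f}\cdot\frac{5^{1-f}}{4} = \frac{5}{2f}\cdot\frac{\varphi(n)}{5^f} < \frac{5}{2f}, \]
so that \( M_K \geq \exp(-5/(2f)) \geq 1 - \tfrac{5}{2f} \).

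To conclude, given \( \eps > 0 \) I would set \( N = \lceil 5^{5/(2\eps)}\rceil \); then for every \( n\geq N \) with \( 5\nmid n \) we have \( f = \operatorname{ord}_n(5) \geq \log_5(n+1) \geq 5/(2\eps) \), and therefore \( M_K \geq 1 - \tfrac{5}{2f} \geq 1 - \eps \), which is the claim.

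I do not anticipate a real obstacle: the only step that is more than bookkeeping is the observation \( n\mid 5^f - 1 \), which is precisely what makes the factor \( 5^{-f} \) decay fast enough to absorb the a priori large exponent \( 2r = 2\varphi(n)/f \); the remaining manipulations are elementary estimates on the logarithm and the standard cyclotomic splitting law.
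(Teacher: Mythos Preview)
Your proof is correct and follows essentially the same route as the paper: both identify \( f=\operatorname{ord}_n(5) \), use \( n\mid 5^f-1 \) to get \( f\ge\log_5(n+1)\to\infty \), and conclude that the bound \( (1-5^{-f})^{2\varphi(n)/f} \) from \Cref{thm:main} tends to \( 1 \). The only difference is cosmetic: the paper substitutes \( 5^f\ge n+1 \) and \( r\le n/\log_5(n+1) \) directly and asserts the resulting function of \( n \) tends to \( 1 \), whereas you pass to logarithms and extract the cleaner explicit inequality \( M_K\ge 1-5/(2f) \), which also gives a concrete choice of \( N \).
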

\begin{proof}
	Indeed, \( f \) is the multiplicative order of \( 5\pmod n \). Trivially, \( 5^f \geq n+1 \) and \( f\geq \log_5(n+1) \). Therefore, \( r= \varphi(n)/f \leq n/\log_5(n+1) \).
	\[ \lp 1 - \frac{1}{5^f}\rp^{2r}\geq \lp 1 - \frac{1}{n+1}\rp^{2n/\log_5(n+1)} \]
	This function is increasing, and tends to \( 1 \) as \( n \to \infty \).
\end{proof}

\section{Acknowledgements}

I would like to thank my master's thesis advisor Patrick Allen for suggesting to me this problem and for his overall guidance as well as many helpful comments in the revision of this document.

\bibliographystyle{alpha}
\bibliography{lower-bound-cm}

\end{document}